\documentclass[11pt,letterpaper,reqno]{amsart}
\usepackage{tikz}
\usetikzlibrary{positioning, shapes.geometric, arrows.meta, calc}
\usepackage{amssymb}
\usepackage{amsmath}
\usepackage{amsthm}
\usepackage{amsfonts}
\usepackage{bbm}
\usepackage{enumitem} 
\usepackage{pgfplots}
\pgfplotsset{compat=1.18} 
\usepackage{booktabs}
\usepackage{graphicx}
\usepackage[T1]{fontenc}
\usepackage{doi}
\usepackage{float} 
\usepackage{bookmark}
\usepackage{hyperref}
\hypersetup{pdfstartview={FitH}}

\newtheorem{theorem}{Theorem}[section]
\newtheorem{lemma}[theorem]{Lemma}
\newtheorem{proposition}[theorem]{Proposition}
\newtheorem{corollary}[theorem]{Corollary}

\newtheorem{problem}[theorem]{Problem}

\theoremstyle{definition}

\numberwithin{equation}{section}

\newcommand{\R}{\mathbb{R}}        

\newcommand{\C}{\mathbb{C}}

\newcommand{\Z}{\mathbb{Z}}

\begin{document}

\title[Generalizing the Clunie--Hayman construction]{Generalizing the Clunie--Hayman construction in an Erd\H{o}s maximum-term problem}

\author[Y.~He]{Yixin He}
\author[Q.~Tang]{Quanyu Tang}

\address{School of Mathematical Sciences, Fudan University, Shanghai 200433, P. R. China}
\email{hyx717math@163.com}
\address{School of Mathematics and Statistics, Xi'an Jiaotong University, Xi'an 710049, P. R. China}
\email{tang\_quanyu@163.com}

\subjclass[2020]{30D15, 30D20, 65G30}



\keywords{Erd\H{o}s problem, maximum term, transcendental entire function, interval arithmetic}

\begin{abstract}
Let $f(z)=\sum_{n\ge0}a_n z^n$ be a transcendental entire function and write $M(r,f):=\max_{|z|=r}|f(z)|$ and $\mu(r,f):=\max_{n\ge0}|a_n|\,r^n$. A problem of Erd\H{o}s asks for the value of
$$
B:=\sup_f \liminf_{r\to\infty}\frac{\mu(r,f)}{M(r,f)}.
$$
In 1964, Clunie and Hayman proved that $\frac{4}{7}<B<\frac{2}{\pi}$. In this paper we develop a generalization of their construction via a scaling identity and obtain the explicit lower bound
$$
B>0.58507,
$$
improving the classical constant $\frac{4}{7}$.
\end{abstract}

\maketitle

\section{Introduction}

Let $f(z)=\sum_{n=0}^{\infty}a_n z^n$ be an entire function and put
\[
M(r,f):=\max_{|z|=r}|f(z)|,
\qquad
\mu(r,f):=\max_{n\ge 0}|a_n|\,r^n.
\]Following Erd\H{o}s, we consider the ratio $\mu(r,f)/M(r,f)$, which measures how large the maximal term of the power series can be
compared to the maximum modulus on the circle $|z|=r$.

In~\cite[p.~249]{Er61} (see also~\cite[Problem 2.14(c)]{HaLi18}), Erd\H{o}s proposed the following problem,
which also appears as Problem~\#513 on Bloom's Erd\H{o}s Problems website~\cite{EP513}.

\begin{problem}[\cite{Er61}]\label{prob:EP513}
Let $f(z)=\sum_{n=0}^\infty a_n z^n$ be a transcendental entire function. Determine the constant
\[
B:=\sup_{f}\ \liminf_{r\to\infty}\frac{\mu(r,f)}{M(r,f)}.
\]
\end{problem}

Equivalently, writing
\[
\beta(f):=\liminf_{r\to\infty}\frac{\mu(r,f)}{M(r,f)},
\]
we have $B=\sup\{\beta(f): f \text{ is transcendental entire}\}$, and
Problem~\ref{prob:EP513} asks for the value of $B$.

Erd\H{o}s~\cite{Er61} remarked that $B\in[1/2,1)$, and K\"{o}v\'{a}ri (unpublished) observed that in fact $B>1/2$.
In 1964, Clunie and Hayman~\cite{ClHa64} initiated the first systematic study and proved that
\[
0.57143\approx \frac{4}{7}<B<\frac{2}{\pi}\approx 0.63662.
\]Other results concerning the ratio $\mu(r,f)/M(r,f)$ were obtained by Gray and Shah~\cite{GrSh63}.

The lower-bound construction of Clunie and Hayman~\cite[p.~156]{ClHa64} is based on a remarkable scaling identity. Fix $K>1$ and consider the Laurent series
\[
k(z)=\sum_{n\in\mathbb Z}\frac{(-1)^{n(n-1)/2}}{K^{n(n+1)/2}} z^n,
\]
which defines a holomorphic function on $\mathbb C\setminus\{0\}$ and satisfies
\begin{equation}\label{eq:ClHa-ident-v1}
k(Kz)=z\,k(-z).
\end{equation}
Let
\[
f(z)=\sum_{n\ge 0}\frac{(-1)^{n(n-1)/2}}{K^{n(n+1)/2}} z^n
\]
be the associated entire function obtained by discarding the negative-index terms. Since the negative-index tail of $k$ tends to $0$ as $z\to\infty$, one has $f(z)=k(z)+o(1)$ as $z\to\infty$. Iterating~\eqref{eq:ClHa-ident-v1} yields an explicit formula for $M(K^m,k)$, and hence the estimation of $\beta(f)$ is reduced to controlling the single quantity $\max_{|z|=1}|k(z)|$.

The present paper develops a two-parameter extension of the Clunie--Hayman construction.
We replace the ``sign pattern'' $(-1)^{n(n-1)/2}$ by a unimodular phase $\varepsilon^{n(n-1)/2}$. More precisely, given parameters $K>1$ and $\varepsilon\in\mathbb C$ with $|\varepsilon|=1$, we set\[
f_{K,\varepsilon}(z):=\sum_{n=0}^{\infty}\frac{\varepsilon^{n(n-1)/2}}{K^{n(n+1)/2}} z^n,
\qquad
k_{K,\varepsilon}(z):=\sum_{n\in\mathbb Z}\frac{\varepsilon^{n(n-1)/2}}{K^{n(n+1)/2}} z^n.
\]The Laurent series $k_{K,\varepsilon}$ satisfies the scaling identity
\[
k_{K,\varepsilon}(Kz)=z\,k_{K,\varepsilon}(\varepsilon z),
\]
so that iterating this identity gives an explicit scaling law for $M(K^m,k_{K,\varepsilon})$. A key point is that, for this family, one can compute the maximum term exactly on the geometric sequence $r_m=K^m$,
and reduce $\beta(f_{K,\varepsilon})$ to a $\liminf$ over these radii.
Combining these ingredients with an approximation argument comparing $M(K^m,f_{K,\varepsilon})$ and $M(K^m,k_{K,\varepsilon})$
for large $m$, we obtain an exact formula expressing $\beta(f_{K,\varepsilon})$ in terms of a single quantity on the unit circle:
\[
\beta(f_{K,\varepsilon})=\frac{1}{\max_{|z|=1}|k_{K,\varepsilon}(z)|}.
\]
Thus the problem of producing good lower bounds on $B$ is reduced to finding parameters $(K,\varepsilon)$ for which $\max_{|z|=1}|k_{K,\varepsilon}(z)|$
is small.

Our main result is an explicit improvement over the classical lower bound $4/7$.

\begin{theorem}\label{thm:main}
There exist parameters $K_0>1$ and $|\varepsilon_0|=1$ such that the transcendental entire function
$f_{K_0,\varepsilon_0}$ satisfies
\[
\beta(f_{K_0,\varepsilon_0})>0.58507.
\]
Consequently, $B>0.58507$.
\end{theorem}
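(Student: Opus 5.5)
The plan is to first establish the general formula $\beta(f_{K,\varepsilon}) = 1/\max_{|z|=1}|k_{K,\varepsilon}(z)|$ announced in the introduction. Then I exhibit explicit parameters $(K_0,\varepsilon_0)$ and certify, by rigorous interval arithmetic, that $\max_{|z|=1}|k_{K_0,\varepsilon_0}(z)|<1/0.58507\approx 1.70919$.

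\textbf{Scaling law.} Write $c_n=\varepsilon^{n(n-1)/2}K^{-n(n+1)/2}$. Since $c_{n+1}/c_n=\varepsilon^n K^{-(n+1)}$, comparing coefficients gives $k(Kz)=z\,k(\varepsilon z)$. Iterating $m$ times yields
\[
k(K^m z)=\varepsilon^{m(m-1)/2}z^m\,k(\varepsilon^m z).
\]
Since rotation by $\varepsilon^m$ preserves the unit circle, this gives $M(K^m,k)=M(1,k)$ for every $m\ge0$.

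\textbf{Maximum term and reduction to radii $K^m$.} We have $|c_n|r^n=K^{-n(n+1)/2}r^n$. At $r=K^m$ the exponent $nm-n(n+1)/2$ is maximized at $n=m$ and $n=m-1$, so $\mu(K^m,f)=K^{m(m-1)/2}$.

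To reduce the $\liminf$ to these radii, I will use the log-convexity structure. For $r\in[K^{m-1},K^m]$ the term $n=m-1$ is dominant, so $\mu(r)=K^{-(m-1)m/2}r^{m-1}$. On the other side, the scaling law gives $M(r,k)=r^{m-1}K^{-(m-1)m/2}M(r/K^{m-1},k)$. Hence on each such interval
\[
\frac{\mu(r,f)}{M(r,k)}=\frac{1}{M(s,k)},\qquad s=r/K^{m-1}\in[1,K].
\]
Moreover $M(s,k)=M(K^{-1}\cdot Ks)$, and the identity $k(Kz)=z\,k(\varepsilon z)$ shows $M(Ks,k)=s\,M(s,k)$. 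So the quantity $\sup_{s\in[1,K]}M(s,k)$ is what really governs the $\liminf$.

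I need to check carefully whether the infimum is attained at the endpoints $s=1,K$. This is plausible: by the Hadamard three-circles theorem, $\log M(s,k)$ is convex in $\log s$, while $\log s$ itself is linear. The endpoint values are $M(1,k)$ and $M(K,k)=M(1,k)$, the latter after normalizing by the factor $s$. Convexity then bounds the middle by the endpoints, so the minimum of the ratio occurs at $r=K^m$. Precisely, $\log\bigl(M(s,k)/\mu\bigr)$ is convex in $\log s$ with equal endpoint values, hence bounded by them.

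Next, $f=k-\sum_{n<0}c_nz^n$, and the tail is $O(1/|z|)$, while $M(r,k)\ge\mu\to\infty$. Therefore $M(r,f)/M(r,k)\to1$, which gives $\beta(f)=1/M(1,k)$. Transcendence is clear since infinitely many $c_n\ne0$.

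\textbf{Numerical certification, the main obstacle.} First I search numerically over $K>1$ and $\varepsilon=e^{i\theta}$ to minimize $\max_\theta|k(e^{i\theta})|$; the Clunie--Hayman case $\varepsilon=-1$ gives roughly $7/4$. Having fixed rational $K_0,\theta_0$, I truncate $k$ to $|n|\le N$. The tail is bounded by a geometric series dominated by $K_0^{-n(n+1)/2}$ for $n>N$, and by $K_0^{-n(n-1)/2}$ for $n<-N$ (using $c_{-n}$ with $|c_{-n}|=K^{-n(n-1)/2}$).

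I then partition the circle into small arcs and, on each arc, bound $|k|$ via interval arithmetic. I control variation using $\max|k'|\le\sum|n||c_n|$ times the arc length, or by direct interval evaluation. This yields a rigorous certified upper bound below $1.70919$.

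The hardest part is making this enclosure rigorous and tight enough near the near-maximal points. I would first try adaptive subdivision with a derivative bound, refining only on arcs where the estimate exceeds the target.
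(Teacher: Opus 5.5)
Your analytic reduction to $\beta(f_{K,\varepsilon})=1/\max_{|z|=1}|k_{K,\varepsilon}(z)|$ is sound, but fix one slip. Iterating $k(Kz)=z\,k(\varepsilon z)$ gives $k(K^mz)=K^{m(m-1)/2}\varepsilon^{m(m-1)/2}z^m\,k(\varepsilon^m z)$. Hence $M(K^m,k)=K^{m(m-1)/2}M(1,k)$, not $M(1,k)$; the latter would contradict your own remark that $M(r,k)\to\infty$. Your later formula $M(r,k)=r^{m-1}K^{-m(m-1)/2}M(r/K^{m-1},k)$ is correct, and it is all you actually use, so nothing downstream breaks.

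Your endpoint reduction is a mild variation on the paper's. You obtain the exact identity $\mu(r,f)/M(r,k)=1/M(s,k)$ with $s=r/K^{m-1}\in[1,K]$. You then apply the three-circles theorem once, to $k$ on the fundamental annulus $1\le|z|\le K$, using $M(K,k)=M(1,k)$. This gives the global inequality $M(r,k)\le A\,\mu(r,f)$ for all $r\ge1$, with equality at $r=K^m$. You pass to $f$ only at the end via $M(r,f)/M(r,k)\to1$. For that step you need $M(r,k)\ge\mu(r,f)$, which follows from Cauchy's inequality for Laurent coefficients and should be stated. The paper instead uses convexity of $\log M(r,f)$ against the piecewise-affine $\log\mu(r,f)$ to reduce to $r=K^m$ (Proposition~\ref{prop:endpoint-K}), and compares $f$ with $k$ only on those circles. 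Both are equally short; yours shows additionally that $\mu(r,f)/M(r,k)$ is exactly periodic in $\log r$ with period $\log K$.

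The real gap is that the proposal never supplies the quantitative input the theorem is about. The statement is an existence claim with the specific constant $0.58507$. Nothing you write shows that some $(K,\varepsilon)$ has $A(K,\varepsilon)<1/0.58507\approx1.709197$; the only value you quote, $\varepsilon=-1$ with $A\approx 7/4$, lies above the target.

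The paper closes this concretely:
\begin{itemize}
\item It fixes $K_0=7137/2000$ and $\varepsilon_0=e^{i\alpha_0}$ with $\alpha_0=198074929/50000000$.
\item It rewrites $k$ on the circle as the cosine sum of Lemma~\ref{lem:cos}, pairing $n$ with $-n-1$.
\item It truncates at $n\le 5$, with tail below $5.1\times10^{-12}$.
\item It evaluates the resulting polynomial in Arb on a uniform mesh of $2\times10^6$ points and combines this with the Lipschitz constant $4$, obtaining $A_0<1.70919$.
\end{itemize}
The certified mesh maximum is about $1.7091764$, only about $2\times10^{-5}$ below $1/0.58507$, so the parameters must be located by a fairly precise optimization. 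Your adaptive-subdivision certification is the same method and would work. However, until concrete parameters are exhibited and the enclosure is actually carried out, what you have is a correct strategy rather than a proof of the stated inequality.
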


The only computational input is a rigorous upper bound for $\max_{|z|=1}|k_{K_0,\varepsilon_0}(z)|$,
obtained using ball arithmetic. Reproducibility details, including code and logs, are provided in Appendix~\ref{app:cert}.

\subsection{Declaration of AI usage}
An AI assistant (ChatGPT, model: GPT-5.2 Pro) was used for exploratory brainstorming and to draft the initial version of the script \texttt{cert\_mesh\_arb.py} in Appendix~\ref{app:cert}. All mathematical arguments and all code were subsequently checked and verified by the human authors.

\subsection{Paper organization}
In Section~\ref{sec:section2} we develop a two-parameter extension of the Clunie--Hayman construction.
We establish the key scaling identity and its iterates, derive an explicit formula for the maximum modulus along the geometric radii $r=K^m$,
determine the maximum term on the same radii, and prove an endpoint reduction for the defining $\liminf$.
Together these steps yield an exact expression that reduces the lower-bound problem to controlling a single maximum on the unit circle.
In Section~\ref{sec:section3} we fix explicit parameters and certify the required bound using interval (ball) arithmetic.
Appendix~\ref{app:cert} documents the certification procedure and provides the code and logs for independent verification.

\section{Scaling identity and an exact formula for \texorpdfstring{$\beta(f_{K,\varepsilon})$}{beta}}\label{sec:section2}

Fix parameters
\[
K>1,\qquad \varepsilon\in\C,\quad |\varepsilon|=1.
\]
For each integer $n\in\Z$ define
\[
T_n:=\frac{n(n+1)}{2}\in\Z,\qquad
A_n:=\frac{\varepsilon^{n(n-1)/2}}{K^{T_n}}.
\]
We consider the entire function
\[
f_{K,\varepsilon}(z):=\sum_{n=0}^{\infty}A_n z^n,
\]
together with the associated bilateral Laurent series
\[
k_{K,\varepsilon}(z):=\sum_{n\in\Z}A_n z^n.
\]
When no confusion can arise we suppress the parameters and write simply $f=f_{K,\varepsilon}$ and $k=k_{K,\varepsilon}$.

\begin{lemma}\label{lem:conv}
The function $f_{K,\varepsilon}$ is transcendental entire, and the Laurent series $k_{K,\varepsilon}$
defines a holomorphic function on $\C\setminus\{0\}$.
\end{lemma}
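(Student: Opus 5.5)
The plan is to bound the coefficients directly and use the root test, or equivalently Cauchy--Hadamard, on both halves of the Laurent series. Since $|\varepsilon|=1$, every coefficient satisfies
\[
|A_n|=K^{-T_n}=K^{-n(n+1)/2}\qquad (n\in\Z).
\]
So $|A_n|^{1/n}=K^{-(n+1)/2}\to 0$ as $n\to+\infty$, because $K>1$. Hence $\limsup_{n\to\infty}|A_n|^{1/n}=0$, and the radius of convergence of $\sum_{n\ge0}A_nz^n$ is infinite. This shows that $f_{K,\varepsilon}$ is entire. Concretely, one can also compare with a geometric series: for $|z|\le R$ and $n$ large enough that $K^{(n+1)/2}\ge 2R$, we get $|A_nz^n|\le 2^{-n}$. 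This gives locally uniform convergence, and the Weierstrass theorem then gives holomorphy.

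For the negative-index part, substitute $n=-m$ with $m\ge1$. Then $T_{-m}=m(m-1)/2$, so
\[
|A_{-m}z^{-m}|=K^{-m(m-1)/2}|z|^{-m}.
\]
The sum $\sum_{m\ge1}A_{-m}w^m$ with $w=1/z$ therefore has coefficients of modulus $K^{-m(m-1)/2}$, and their $m$-th roots tend to $0$. So it is an entire function of $w$, and hence holomorphic in $z$ on $\C\setminus\{0\}$, converging locally uniformly there. Adding the two parts shows that $k_{K,\varepsilon}$ converges absolutely and locally uniformly on $\C\setminus\{0\}$, so it defines a holomorphic function there.

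It remains to show transcendence, that is, that $f$ is not a polynomial. This holds because every coefficient $A_n$ is nonzero: $|A_n|=K^{-T_n}>0$. So the power series has infinitely many nonzero terms, and by uniqueness of Taylor coefficients $f$ cannot equal a polynomial.

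I do not expect a genuine obstacle. The only point needing care is the index bookkeeping $T_{-m}=m(m-1)/2$, which shows the negative tail still has Gaussian decay. It also shows that this tail tends to $0$ as $z\to\infty$, which the paper uses later.
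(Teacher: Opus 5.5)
Your proof is correct and follows essentially the same approach as the paper. You use Cauchy--Hadamard with $|A_n|^{1/n}=K^{-(n+1)/2}\to0$ for entireness, nonvanishing of every $A_n$ for transcendence, and the decay $|A_{-m}|=K^{-m(m-1)/2}$ for the negative tail; the only cosmetic difference is that you treat the principal part as an entire function of $w=1/z$, whereas the paper applies the Weierstrass $M$-test directly on annuli $\rho\le|z|\le R$, and both give locally uniform convergence on $\C\setminus\{0\}$.
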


\begin{proof}
Since $|A_n|=K^{-T_n}=K^{-n(n+1)/2}$ for $n\ge0$, we have
\[
\limsup_{n\to\infty}|A_n|^{1/n}
=\lim_{n\to\infty}K^{-(n+1)/2}=0,
\]
so $f$ has infinite radius of convergence by Cauchy--Hadamard and is entire. As $A_n\neq0$ for all $n\ge0$,
it is not a polynomial and hence is transcendental.

For $k$, fix an annulus $\rho\le |z|\le R$ with $0<\rho<R<\infty$.
For $n\ge0$ we have $|A_n z^n|\le K^{-n(n+1)/2}R^n$, and for $n=-m\le -1$,
using $T_{-m}=m(m-1)/2$,
\[
|A_{-m}z^{-m}|\le K^{-m(m-1)/2}\rho^{-m}.
\]
Both majorant series converge by the ratio test, hence the Laurent series converges absolutely and uniformly
on $\{\rho\le |z|\le R\}$ by the Weierstrass $M$-test. Since $\rho,R$ were arbitrary, $k$ is holomorphic on $\C\setminus\{0\}$.
\end{proof}

The coefficients were chosen so that the Laurent series $k$ satisfies a simple scaling identity.
This is the key mechanism in the argument of Clunie and Hayman~\cite{ClHa64}, reducing the estimation of $\beta(f)$ to a bound for $\max_{|z|=1}|k(z)|$.

\begin{lemma}\label{lem:functional}
The function $k_{K,\varepsilon}$ satisfies
\[
k_{K,\varepsilon}(Kz)=z\,k_{K,\varepsilon}(\varepsilon z),\qquad 0<|z|<\infty.
\]
\end{lemma}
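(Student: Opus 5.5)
We prove the identity by comparing the two Laurent expansions term by term, which Lemma~\ref{lem:conv} permits. Since both series converge absolutely on $\C\setminus\{0\}$, we may substitute and reindex freely. The plan is to expand $k(Kz)$, reindex so that its coefficients line up with those of $z\,k(\varepsilon z)$, and then check that the coefficients agree. The only possible subtlety is that the phase $\varepsilon^{n(n-1)/2}$ is an integer power of $\varepsilon$, because $n(n-1)/2\in\Z$ for every $n\in\Z$. There is therefore no branch ambiguity, and the usual exponent laws $\varepsilon^{a}\varepsilon^{b}=\varepsilon^{a+b}$ hold for all integers $a,b$, including negative ones, since $|\varepsilon|=1\neq 0$.

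First, expand the left side:
\[
k(Kz)=\sum_{n\in\Z}\frac{\varepsilon^{n(n-1)/2}}{K^{n(n+1)/2}}K^n z^n=\sum_{n\in\Z}\frac{\varepsilon^{n(n-1)/2}}{K^{n(n-1)/2}}z^n,
\]
using $n(n+1)/2-n=n(n-1)/2$.

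Second, expand the right side:
\[
z\,k(\varepsilon z)=\sum_{m\in\Z}\frac{\varepsilon^{m(m-1)/2+m}}{K^{m(m+1)/2}}z^{m+1}=\sum_{m\in\Z}\frac{\varepsilon^{m(m+1)/2}}{K^{m(m+1)/2}}z^{m+1}.
\]
Setting $n=m+1$, this becomes $\sum_{n\in\Z}\varepsilon^{n(n-1)/2}K^{-n(n-1)/2}z^n$. Reindexing is legitimate because $m\mapsto m+1$ is a bijection of $\Z$ and the series converge absolutely.

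The coefficient of $z^n$ on each side is $(\varepsilon/K)^{n(n-1)/2}$, so the two functions coincide on $0<|z|<\infty$. Neither step poses any real obstacle. The only points to state explicitly are the two exponent identities $T_n-n=T_{n-1}$ and $\tfrac{m(m-1)}2+m=\tfrac{m(m+1)}2$, and the justification for rearranging the series, which comes from the absolute convergence established in Lemma~\ref{lem:conv}.
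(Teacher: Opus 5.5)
Your proposal is correct and follows essentially the same route as the paper. Both expand each side termwise, justified by the absolute convergence from Lemma~\ref{lem:conv}. Both use $T_n-n=n(n-1)/2$ on the left side and the index shift $n\mapsto n-1$ on the right side to match the coefficients $\varepsilon^{n(n-1)/2}/K^{n(n-1)/2}$.
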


\begin{proof}
On any annulus $0<\rho\le |z|\le R<\infty$ the Laurent series converges absolutely and uniformly, so we may compute termwise:
\[
k(Kz)=\sum_{n\in\Z}A_n(Kz)^n=\sum_{n\in\Z}A_nK^n z^n.
\]
Using $T_n-n=n(n-1)/2$, we have $A_nK^n=\varepsilon^{n(n-1)/2}/K^{n(n-1)/2}$, hence
\[
k(Kz)=\sum_{n\in\Z}\frac{\varepsilon^{n(n-1)/2}}{K^{n(n-1)/2}}\,z^n.
\]
On the other hand,
\[
z\,k(\varepsilon z)=z\sum_{n\in\Z}A_n(\varepsilon z)^n
=\sum_{n\in\Z}A_n\varepsilon^n z^{n+1}
=\sum_{n\in\Z}A_{n-1}\varepsilon^{n-1}z^{n}.
\]
Since $T_{n-1}=n(n-1)/2$, we get $A_{n-1}\varepsilon^{n-1}=\varepsilon^{n(n-1)/2}/K^{n(n-1)/2}$,
which matches the coefficient in $k(Kz)$.
\end{proof}

Define
\[
A(K,\varepsilon):=\max_{|z|=1}\bigl|k_{K,\varepsilon}(z)\bigr|.
\]Iterating Lemma~\ref{lem:functional} yields an explicit formula for the maximum modulus of $k_{K,\varepsilon}$
on the circles $|z|=K^m$.

\begin{lemma}\label{lem:iterates}
For each integer $m\ge 1$, one has the exact identity
\[
M(K^m,k_{K,\varepsilon})=K^{m(m-1)/2}\,A(K,\varepsilon).
\]
\end{lemma}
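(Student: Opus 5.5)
Iterating Lemma~\ref{lem:functional} $m$ times gives the result; the plan is to first establish the identity
\[
k(K^m z)=\varepsilon^{m(m-1)/2} z^m\,k(\varepsilon^m z),\qquad 0<|z|<\infty,\ m\ge 0,
\]
by induction on $m$. The case $m=0$ is trivial. For the inductive step, write $k(K^{m+1}z)=k(K^m\cdot Kz)$ and apply the induction hypothesis with $Kz$ in place of $z$:
\[
k(K^{m+1}z)=\varepsilon^{m(m-1)/2}K^m z^m\,k(\varepsilon^m Kz)=\varepsilon^{m(m-1)/2}K^m z^m\,k(K\cdot\varepsilon^m z).
\]
Lemma~\ref{lem:functional} applied at the point $\varepsilon^m z$ then gives $k(K\varepsilon^m z)=\varepsilon^m z\,k(\varepsilon^{m+1}z)$. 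Collecting factors yields $\varepsilon^{m(m-1)/2+m}K^m z^{m+1}k(\varepsilon^{m+1}z)$, and $m(m-1)/2+m=(m+1)m/2$. This is almost the claimed form, but an extra factor $K^m$ appears, so the powers of $K$ must be tracked carefully. The correct identity should therefore read
\[
k(K^m z)=K^{m(m-1)/2}\varepsilon^{m(m-1)/2}z^m\,k(\varepsilon^m z),
\]
with the $K$-exponent accumulating as $0+1+\dots+(m-1)=m(m-1)/2$. I will state the induction hypothesis in this corrected form; the inductive step above then gives $K^{m(m-1)/2}\cdot K^m=K^{m(m+1)/2}$, as required.

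With the iterated identity in hand, take $|z|=1$. Since $|\varepsilon|=1$, we get
\[
|k(K^m z)|=K^{m(m-1)/2}\,|k(\varepsilon^m z)|.
\]
As $z$ ranges over the unit circle, $K^m z$ ranges over the circle $|w|=K^m$. Because multiplication by the unimodular constant $\varepsilon^m$ is a bijection of the unit circle, $\varepsilon^m z$ also ranges over the whole unit circle. Taking maxima on both sides therefore gives $M(K^m,k)=K^{m(m-1)/2}A(K,\varepsilon)$. The maximum exists by continuity of $k$ on the compact circle, which follows from Lemma~\ref{lem:conv}.

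The only real obstacle is bookkeeping the exponents of $K$ and $\varepsilon$ in the induction; the modulus argument afterwards is immediate, and the identity in fact holds for every $m\ge 0$.
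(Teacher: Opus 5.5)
Your proposal is correct and follows the paper's proof. You prove by induction the same iterated identity $k(K^m z)=K^{m(m-1)/2}\varepsilon^{m(m-1)/2}z^m\,k(\varepsilon^m z)$, once the factor $K^{m(m-1)/2}$ you initially omitted is restored, and then take moduli on $|z|=1$, using that $z\mapsto\varepsilon^m z$ permutes the unit circle. The only cosmetic difference is the order in the inductive step: you apply the hypothesis at $Kz$ and then Lemma~\ref{lem:functional} at $\varepsilon^m z$, whereas the paper applies Lemma~\ref{lem:functional} at $K^m z$ first and then the hypothesis at $\varepsilon z$.
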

\begin{proof}
We prove by induction on $m$ the stronger identity
\begin{equation}\label{eq:iterate-exact}
k_{K,\varepsilon}(K^m z)
=K^{m(m-1)/2}\,\varepsilon^{m(m-1)/2}\, z^m\, k_{K,\varepsilon}(\varepsilon^m z),
\qquad z\neq 0.
\end{equation}
For $m=1$, \eqref{eq:iterate-exact} is exactly Lemma~\ref{lem:functional}.
Assume \eqref{eq:iterate-exact} holds for some $m\ge 1$. Using Lemma~\ref{lem:functional} at $K^m z$ and the commutativity of scalar multiplication,
\[
k_{K,\varepsilon}(K^{m+1}z)
= k_{K,\varepsilon}(K(K^m z))
=K^m z\, k_{K,\varepsilon}(\varepsilon K^m z)
=K^m z\, k_{K,\varepsilon}(K^m(\varepsilon z)).
\]
Applying the induction hypothesis to $\varepsilon z$ gives
\[
k_{K,\varepsilon}(K^m(\varepsilon z))
=K^{m(m-1)/2}\,\varepsilon^{m(m-1)/2}\,(\varepsilon z)^m\,
k_{K,\varepsilon}(\varepsilon^{m+1} z).
\]
Combining the last two displays yields \eqref{eq:iterate-exact} for $m+1$.

Taking moduli in \eqref{eq:iterate-exact} and using $|\varepsilon|=1$ yields
\[
|k_{K,\varepsilon}(K^m z)|
=K^{m(m-1)/2}\,|z|^m\,|k_{K,\varepsilon}(\varepsilon^m z)|.
\]
Now setting $|z|=1$ and maximizing over $|z|=1$ gives
$M(K^m,k_{K,\varepsilon})=K^{m(m-1)/2}A(K,\varepsilon)$.
\end{proof}

To convert the explicit formula for $M(K^m,k_{K,\varepsilon})$ in Lemma~\ref{lem:iterates} into information about $\beta(f_{K,\varepsilon})$,
we need two further inputs: (i) an exact description of the maximum term $\mu(r,f_{K,\varepsilon})$ at the same radii $r=K^m$, and (ii) an endpoint reduction showing that the $\liminf$ defining $\beta(f_{K,\varepsilon})$ may be taken along $r_m=K^m$.

\begin{lemma}\label{lem:mu}
Let $r_m:=K^m$ for $m\ge 0$. Then
\[
\mu(r_m,f_{K,\varepsilon})=K^{m(m-1)/2}\qquad (m\ge 0).
\]
Moreover, for $K^m<r<K^{m+1}$ the maximum term is uniquely attained at index $m$,
while at the endpoint $r=K^m$ (for $m\ge 1$) the two indices $m-1$ and $m$ tie for the maximum.
\end{lemma}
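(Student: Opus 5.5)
The plan is to compare consecutive terms directly. For $n\ge0$ and $r>0$ we have $|A_n|r^n=K^{-T_n}r^n$. Since $|\varepsilon|=1$, the phases play no role, and $\mu(r,f_{K,\varepsilon})=\max_{n\ge0}K^{-T_n}r^n$. Set $t_n(r):=K^{-T_n}r^n$. Using $T_n-T_{n-1}=n$, the ratio of consecutive terms is
\[
\frac{t_n(r)}{t_{n-1}(r)}=\frac{r}{K^{n}},\qquad n\ge1 .
\]
This ratio is strictly decreasing in $n$, so the sequence $(t_n(r))_{n\ge0}$ is log-concave.

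From this we locate the maximum exactly. We have $t_n>t_{n-1}$ iff $r>K^n$, $t_n=t_{n-1}$ iff $r=K^n$, and $t_n<t_{n-1}$ iff $r<K^n$. Now fix $K^m<r<K^{m+1}$ with $m\ge0$. For $1\le n\le m$ we have $K^n\le K^m<r$, so $t_0<t_1<\dots<t_m$. For $n\ge m+1$ we have $K^n\ge K^{m+1}>r$, so $t_m>t_{m+1}>t_{m+2}>\cdots$. Hence index $m$ is the unique maximizer.

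At the endpoint $r=K^m$ with $m\ge1$, the same comparisons show that $t_n$ strictly increases for $n\le m-1$ and strictly decreases for $n\ge m$, with $t_{m-1}=t_m$. So exactly the indices $m-1$ and $m$ tie. For $m=0$, i.e.\ $r=1$, we get $t_0=1$ and $t_n<t_{n-1}$ for all $n\ge1$, so index $0$ is the unique maximizer. In every case the maximal value at $r=K^m$ is
\[
t_m(K^m)=K^{m^2-T_m}=K^{m^2-m(m+1)/2}=K^{m(m-1)/2},
\]
which is the claimed formula. For $m\ge1$ this agrees with $t_{m-1}(K^m)=K^{m(m-1)-T_{m-1}}=K^{m(m-1)/2}$.

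There is no real obstacle here. The only care needed is to keep the strict and non-strict inequalities straight so that the uniqueness and tie statements come out exactly, and to treat $m=0$ separately because index $-1$ is absent from $f$.
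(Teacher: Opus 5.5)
Your proposal is correct and follows essentially the same route as the paper: both compare consecutive terms via the ratio $r/K^{n}$ (the paper indexes it as $t_{n+1}/t_n=r/K^{n+1}$) and read off the unique maximizer or the tie from where this ratio crosses $1$. Your separate treatment of $m=0$, where index $-1$ is absent and the maximum at $r=1$ is uniquely at $n=0$, is a small point of care that the paper's proof leaves implicit.
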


\begin{proof}
Since $|A_n|r^n=K^{-n(n+1)/2}\,r^n$, the ratio of consecutive terms is
\[
\frac{|A_{n+1}|r^{n+1}}{|A_n|r^n}=\frac{r}{K^{n+1}}.
\]
If $K^m<r<K^{m+1}$, this ratio is $>1$ for $n\le m-1$ and $<1$ for $n\ge m$, so the terms increase up to $n=m$
and decrease thereafter, giving a unique maximiser $n=m$. If $r=K^m$, the ratio equals $1$ precisely at $n=m-1$ and the maximum is
attained exactly at $n=m-1$ and $n=m$. In particular,
\[
\mu(r_m,f_{K,\varepsilon})=|A_m|K^{m^2}
=K^{-m(m+1)/2}K^{m^2}
=K^{m(m-1)/2}.
\qedhere
\]
\end{proof}

\begin{proposition}\label{prop:endpoint-K}
For $f_{K,\varepsilon}$ one may take $r_m=K^m$ in the definition of $\beta(f_{K,\varepsilon})$. In particular,
\[
\beta(f_{K,\varepsilon})=\liminf_{m\to\infty}\frac{\mu(K^m,f_{K,\varepsilon})}{M(K^m,f_{K,\varepsilon})}.
\]
\end{proposition}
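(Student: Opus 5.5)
The plan is to prove the two inequalities between $\beta(f_{K,\varepsilon})$ and the $\liminf$ along $r_m=K^m$ separately.

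The inequality $\beta(f)\le\liminf_{m\to\infty}\mu(K^m,f)/M(K^m,f)$ is immediate. A $\liminf$ over all $r\to\infty$ is at most the $\liminf$ along any particular sequence tending to infinity, and $K^m\to\infty$ because $K>1$.

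For the reverse inequality I will show a stronger, exact statement. For every $m\ge 0$ and every $r\in[K^m,K^{m+1}]$,
\[
\frac{\mu(r,f)}{M(r,f)}\ \ge\ \min\Bigl\{\frac{\mu(K^m,f)}{M(K^m,f)},\ \frac{\mu(K^{m+1},f)}{M(K^{m+1},f)}\Bigr\}.
\]
From this, $\inf_{r\ge K^m}\mu/M\ge\inf_{j\ge m}\mu(K^j)/M(K^j)$, and letting $m\to\infty$ gives the claim.

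To prove the displayed inequality I pass to the variable $t=\log r$ and set
\[
\varphi(t):=\log\mu(e^t,f)-\log M(e^t,f).
\]
\begin{enumerate}[label=(\roman*)]
\item By Lemma~\ref{lem:mu}, on the closed interval $[K^m,K^{m+1}]$ the maximum term is attained at index $m$. In the interior this is the unique maximiser. At the endpoint $r=K^m$ the indices $m-1$ and $m$ tie, and at $r=K^{m+1}$ the indices $m$ and $m+1$ tie. In every case $\mu(r,f)=|A_m|r^m$, so $\log\mu(e^t,f)=\log|A_m|+mt$ is affine in $t$ on $[m\log K,(m+1)\log K]$.
\item By Hadamard's three circles theorem, $\log M(e^t,f)$ is a convex function of $t$. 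This holds because $f$ is entire and not identically zero.
\end{enumerate}
Hence $\varphi$, an affine function minus a convex one, is concave on $[m\log K,(m+1)\log K]$. A concave function on a compact interval attains its minimum at an endpoint, and exponentiating gives the displayed inequality.

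There is no real obstacle here. The only point needing care is that $\mu$ is given by the single term $|A_m|r^m$ on the \emph{closed} interval, including both endpoints. This is exactly what the tie statement in Lemma~\ref{lem:mu} supplies, and it is what lets the endpoint values of the affine piece coincide with $\mu(K^m,f)$ and $\mu(K^{m+1},f)$.
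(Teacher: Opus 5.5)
Your proof is correct and follows essentially the same route as the paper: $\log\mu$ is affine on each block $[K^m,K^{m+1}]$, $\log M$ is convex by Hadamard's three-circles theorem, so the log-ratio is concave and its minimum on the block is at an endpoint. Your use of the ties in Lemma~\ref{lem:mu} to get $\mu(r,f)=|A_m|r^m$ on the closed interval handles the endpoints more carefully than the paper, which states concavity only on the open interval and uses continuity at the endpoints implicitly.
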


\begin{proof}
Fix $m\ge0$ and consider $r\in(K^m,K^{m+1})$. By Lemma~\ref{lem:mu}, the maximum term is uniquely attained at index $m$, hence
$\mu(r,f)=|A_m|r^m$, so the function $t=\log r\mapsto \log\mu(e^t,f)=\log|A_m|+mt$ is affine on $(\log K^m,\log K^{m+1})$.
By Hadamard's three-circles theorem, $t\mapsto\log M(e^t,f)$ is convex. Therefore \(t\mapsto \log\left(\mu(e^t,f)/M(e^t,f)\right)\) is concave on $(\log K^m,\log K^{m+1})$, so $\mu(r,f)/M(r,f)$ attains its minimum on $[K^m,K^{m+1}]$ at an endpoint.
Consequently, for each $N$,
\[
\inf_{r\ge r_N}\frac{\mu(r,f)}{M(r,f)}
=\inf_{m\ge N}\ \inf_{r\in[r_m,r_{m+1}]}\frac{\mu(r,f)}{M(r,f)}
=\inf_{m\ge N}\ \min\left\{\frac{\mu(K^m,f)}{M(K^m,f)},\,\frac{\mu(K^{m+1},f)}{M(K^{m+1},f)}\right\}.
\]Taking $N\to\infty$ and using $\beta(f)=\lim_{N\to\infty}\inf_{r\ge r_N}\mu(r,f)/M(r,f)$ yields
\[
\beta(f)=\lim_{N\to\infty}\inf_{m\ge N}\ \min\left\{\frac{\mu(K^m,f)}{M(K^m,f)},\,\frac{\mu(K^{m+1},f)}{M(K^{m+1},f)}\right\}=\liminf_{m\to\infty}\frac{\mu(K^m,f)}{M(K^m,f)}.
\qedhere\]
\end{proof}

Finally, $f$ is obtained from $k$ by discarding the negative-index tail. The next two lemmas record that this tail is negligible on large circles.

\begin{lemma}\label{lem:Mmineq}
Let $F$ and $G$ be continuous on the circle $\{|z|=r\}$. Then
\[
\bigl|M(r,F)-M(r,G)\bigr|\le M(r,F-G).
\]
\end{lemma}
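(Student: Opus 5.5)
The plan is to compare the two maxima through the triangle inequality on the circle $\{|z|=r\}$, using only the definition $M(r,H)=\max_{|z|=r}|H(z)|$, which exists for every continuous $H$ because the circle is compact. So $M(r,F)$, $M(r,G)$ and $M(r,F-G)$ are all finite, attained maxima.

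First, fix any $z$ with $|z|=r$. Writing $F(z)=G(z)+(F-G)(z)$ and applying the triangle inequality gives
\[
|F(z)|\le |G(z)|+|F(z)-G(z)|\le M(r,G)+M(r,F-G).
\]
Taking the maximum over $|z|=r$ on the left, which is legitimate since the right-hand side does not depend on $z$, yields $M(r,F)-M(r,G)\le M(r,F-G)$.

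Next, the same argument with the roles of $F$ and $G$ interchanged, together with $M(r,G-F)=M(r,F-G)$ (because $|G-F|=|F-G|$ pointwise), gives $M(r,G)-M(r,F)\le M(r,F-G)$. Combining the two one-sided bounds gives the stated inequality for the absolute value.

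There is no real obstacle here. The only point to check is that each maximum is finite and attained, and continuity on the compact circle guarantees this. In what follows, this lemma will be applied with $F=f_{K,\varepsilon}$ and $G=k_{K,\varepsilon}$, where $F-G$ is the negative-index tail of $k_{K,\varepsilon}$.
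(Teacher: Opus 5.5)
Your proof is correct and is the paper's argument: apply the triangle inequality pointwise, maximize over the circle, then interchange $F$ and $G$. You also make explicit the symmetry $M(r,G-F)=M(r,F-G)$, which the paper uses without comment; and in your closing remark, $f-k$ is actually the negative of the negative-index tail, which is harmless since only moduli enter.
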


\begin{proof}
For $|z|=r$, the triangle inequality gives $|F(z)|\le |G(z)|+|F(z)-G(z)|$, hence
$M(r,F)\le M(r,G)+M(r,F-G)$. Interchanging $F$ and $G$ yields the reverse inequality, and the claim follows.
\end{proof}

\begin{lemma}\label{lem:neg}
As $|z|\to\infty$,
\[
k_{K,\varepsilon}(z)-f_{K,\varepsilon}(z)=O\left(\frac{1}{|z|}\right).
\]
\end{lemma}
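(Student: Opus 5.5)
The plan is to write the difference explicitly as the negative-index tail of the Laurent series and then bound it termwise for $|z|\ge 2$, say. By definition,
\[
k_{K,\varepsilon}(z)-f_{K,\varepsilon}(z)=\sum_{n\le -1}A_n z^n=\sum_{m\ge1}A_{-m}z^{-m},
\]
and this series converges absolutely for $z\neq0$ by Lemma~\ref{lem:conv}. Using $T_{-m}=m(m-1)/2$ and $|\varepsilon|=1$, we have $|A_{-m}|=K^{-m(m-1)/2}\le 1$.

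For $|z|\ge 2$ this gives
\[
\Bigl|\sum_{m\ge1}A_{-m}z^{-m}\Bigr|\le \frac{1}{|z|}\sum_{m\ge1}K^{-m(m-1)/2}\,|z|^{-(m-1)}\le \frac{1}{|z|}\sum_{m\ge1}K^{-m(m-1)/2}2^{-(m-1)}\le \frac{2}{|z|}.
\]
Here the factor $|z|^{-1}$ is pulled out, the remaining powers $|z|^{-(m-1)}$ are bounded by $2^{-(m-1)}$, and the weights $K^{-m(m-1)/2}$ are simply bounded by $1$. The resulting geometric series sums to $2$.

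It follows that $|k_{K,\varepsilon}(z)-f_{K,\varepsilon}(z)|\le 2/|z|$ for all $|z|\ge2$, which is the claimed $O(1/|z|)$ bound. There is no real obstacle; the only point needing care is that the constant is uniform in $\arg z$, which holds because the bound depends only on $|z|$. That uniformity is what Lemma~\ref{lem:Mmineq} needs to compare $M(K^m,f)$ and $M(K^m,k)$ later.
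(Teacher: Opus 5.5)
Your proof is correct and takes essentially the same route as the paper: write $k_{K,\varepsilon}-f_{K,\varepsilon}$ as the negative-index tail and bound it termwise using $|A_{-m}|=K^{-m(m-1)/2}$. The only difference is cosmetic: the paper isolates the $m=1$ term $\varepsilon z^{-1}$ and bounds the remainder by $|z|^{-2}\sum_{m\ge2}K^{-m(m-1)/2}$ for $|z|\ge1$, whereas you restrict to $|z|\ge2$ and dominate the whole tail by a geometric series, which gives the explicit constant $2$.
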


\begin{proof}
We have
\[
k(z)-f(z)=\sum_{n=-\infty}^{-1}A_n z^n
=A_{-1}z^{-1}+\sum_{m=2}^{\infty}A_{-m}z^{-m}.
\]
Since $T_{-1}=0$, we have $A_{-1}=\varepsilon$, so the first term is $\varepsilon z^{-1}$.
Moreover, $T_{-m}=m(m-1)/2$ for $m\ge2$, hence the series $\sum_{m\ge2}|A_{-m}|=\sum_{m\ge2}K^{-m(m-1)/2}$ converges.
For $|z|\ge1$ this gives
\[
\left|\sum_{m=2}^{\infty}A_{-m}z^{-m}\right|
\le |z|^{-2}\sum_{m=2}^{\infty}|A_{-m}|
=O(|z|^{-2}),
\]
so $k(z)-f(z)=\varepsilon z^{-1}+O(|z|^{-2})=O(1/|z|)$ as $|z|\to\infty$.
\end{proof}

\begin{theorem}\label{thm:exactbeta}
We have
\[
\beta(f_{K,\varepsilon})=\frac{1}{A(K,\varepsilon)}.
\]
Consequently, for every choice of $(K,\varepsilon)$ we have the lower bound $B\ge 1/A(K,\varepsilon)$.
\end{theorem}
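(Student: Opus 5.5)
The plan is to combine Proposition~\ref{prop:endpoint-K}, Lemma~\ref{lem:mu}, Lemma~\ref{lem:iterates} and the tail estimate of Lemma~\ref{lem:neg}. By Proposition~\ref{prop:endpoint-K} it suffices to compute the limit of $\mu(K^m,f)/M(K^m,f)$ as $m\to\infty$. The numerator is known exactly: Lemma~\ref{lem:mu} gives $\mu(K^m,f)=K^{m(m-1)/2}$.

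For the denominator we compare $f$ with $k$ on the circle $|z|=K^m$. Lemma~\ref{lem:Mmineq} with $F=f$ and $G=k$ gives
\[
\bigl|M(K^m,f)-M(K^m,k)\bigr|\le M(K^m,k-f).
\]
By Lemma~\ref{lem:neg} the right-hand side is $O(K^{-m})$, and in particular it is $o(1)$. By Lemma~\ref{lem:iterates}, $M(K^m,k)=K^{m(m-1)/2}A(K,\varepsilon)$. Dividing by $K^{m(m-1)/2}$ we obtain
\[
\frac{M(K^m,f)}{K^{m(m-1)/2}}=A(K,\varepsilon)+O\bigl(K^{-m-m(m-1)/2}\bigr)\longrightarrow A(K,\varepsilon).
\]
It follows that $\mu(K^m,f)/M(K^m,f)\to 1/A(K,\varepsilon)$. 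The liminf in Proposition~\ref{prop:endpoint-K} is therefore an actual limit, and it equals $1/A(K,\varepsilon)$.

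The one point that needs care is that $A(K,\varepsilon)>0$, so that the quotient makes sense and $M(K^m,f)\neq0$ for large $m$. This holds because $k$ is a nonzero Laurent series (for instance its $z^0$ coefficient is $A_0=1$). Hence $k$ does not vanish identically on $|z|=1$, since the coefficients can be recovered by integrating over that circle, and so $A(K,\varepsilon)>0$.

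The consequence $B\ge 1/A(K,\varepsilon)$ follows directly. By Lemma~\ref{lem:conv}, $f_{K,\varepsilon}$ is transcendental entire, so $B\ge\beta(f_{K,\varepsilon})=1/A(K,\varepsilon)$. I do not expect any real obstacle. The only step that needs attention is checking that the additive error $O(K^{-m})$ is negligible compared with the main term $K^{m(m-1)/2}$, and this is immediate.
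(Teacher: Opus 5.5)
Your proposal is correct and follows the paper's proof essentially step for step. It combines Lemma~\ref{lem:mu}, Lemma~\ref{lem:iterates}, Lemma~\ref{lem:Mmineq} and Lemma~\ref{lem:neg} to get $\mu(K^m,f)/M(K^m,f)\to 1/A(K,\varepsilon)$, then concludes via Proposition~\ref{prop:endpoint-K} and Lemma~\ref{lem:conv}. Your only addition is an explicit justification of $A(K,\varepsilon)>0$ through the constant coefficient $A_0=1$, a point the paper simply asserts.
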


\begin{proof}
Let $r_m=K^m$. Since $k_{K,\varepsilon}$ is not identically zero, $A(K,\varepsilon)>0$. Lemma~\ref{lem:iterates} gives \(M(r_m,k)=K^{m(m-1)/2}\,A(K,\varepsilon)\), while Lemma~\ref{lem:mu} gives $\mu(r_m,f)=K^{m(m-1)/2}$.
By Lemma~\ref{lem:Mmineq} and Lemma~\ref{lem:neg},
\[
\bigl|M(r_m,f)-M(r_m,k)\bigr|\le M(r_m,f-k)=O(K^{-m})=o\bigl(\mu(r_m,f)\bigr).
\]
Hence $M(r_m,f)=K^{m(m-1)/2}A(K,\varepsilon)+o(K^{m(m-1)/2})$, and therefore
\[
\frac{\mu(r_m,f)}{M(r_m,f)}=\frac{K^{m(m-1)/2}}{K^{m(m-1)/2}A(K,\varepsilon)+o(K^{m(m-1)/2})}=\frac{1}{A(K,\varepsilon)}+o(1).
\]
Using Proposition~\ref{prop:endpoint-K} yields $\beta(f_{K,\varepsilon})=1/A(K,\varepsilon)$.
Finally, Lemma~\ref{lem:conv} shows $f_{K,\varepsilon}$ is transcendental entire, so $B\ge 1/A(K,\varepsilon)$.
\end{proof}

\subsection{Theta-function viewpoint}
The material in this subsection is not used elsewhere in the paper.
It is included only to identify $k_{K,\varepsilon}$ with a classical special function, a perspective that can be
useful for organizing and accelerating the subsequent parameter search.
We follow the standard notation for Ramanujan's general theta function (see, e.g., \cite[\S1.6]{GaRa04}).
Let $q:=K^{-1}\in(0,1)$ and define Ramanujan's theta function%
\footnote{For a quick overview, see \href{https://en.wikipedia.org/wiki/Ramanujan_theta_function}{\emph{Ramanujan theta function}} on Wikipedia.}
\[
\Phi(a,b):=\sum_{n=-\infty}^{\infty} a^{n(n+1)/2}\, b^{n(n-1)/2},
\qquad |ab|<1.
\]

\begin{proposition}\label{prop:theta}
We have the exact identity
\[
k_{K,\varepsilon}(z)=\Phi(qz,\varepsilon z^{-1}),\qquad 0<|z|<\infty.
\]
\end{proposition}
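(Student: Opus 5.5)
The plan is a direct termwise comparison of the two bilateral series. The one subtlety is what $a^{n(n+1)/2}$ and $b^{n(n-1)/2}$ mean when $a=qz$ and $b=\varepsilon z^{-1}$ are complex. Since $n(n+1)/2$ and $n(n-1)/2$ are nonnegative integers for every $n\in\Z$, these are honest integer powers and no branch choice arises. I would also check admissibility: $|ab|=|q\varepsilon|=q<1$, so $\Phi(qz,\varepsilon z^{-1})$ is defined for every $z\neq0$.

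Next I expand the general term of $\Phi$:
\[
(qz)^{n(n+1)/2}\,(\varepsilon z^{-1})^{n(n-1)/2}
= q^{n(n+1)/2}\,\varepsilon^{n(n-1)/2}\, z^{n(n+1)/2-n(n-1)/2}.
\]
The exponent of $z$ is $\tfrac{n(n+1)}{2}-\tfrac{n(n-1)}{2}=n$. With $q^{n(n+1)/2}=K^{-T_n}$, the term becomes $\varepsilon^{n(n-1)/2}K^{-T_n}z^n=A_nz^n$. So the $n$-th term of $\Phi(qz,\varepsilon z^{-1})$ equals the $n$-th term of the Laurent series defining $k_{K,\varepsilon}$, index by index.

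Finally, both sides are sums over $n\in\Z$ of the same terms. Lemma~\ref{lem:conv} gives absolute convergence of $\sum_n A_n z^n$ for each fixed $z\neq0$, so the two sums agree and the identity holds on $0<|z|<\infty$.

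I do not expect a real obstacle. The only point needing care is confirming that the exponents are integers, so that the splitting $(qz)^{N}=q^Nz^N$ and $(\varepsilon z^{-1})^{N'}=\varepsilon^{N'}z^{-N'}$ is legitimate for complex arguments.
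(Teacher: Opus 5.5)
Your proposal is correct and follows the paper's own proof: a direct termwise computation showing that the $n$-th term of $\Phi(qz,\varepsilon z^{-1})$ equals $A_n z^n$, using $T_n - n(n-1)/2 = n$ and $q^{T_n}=K^{-T_n}$. Your additional checks, that the exponents are nonnegative integers and that $|ab|=q<1$, are valid and simply make explicit what the paper leaves implicit.
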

\begin{proof}
Compute directly:
\[
\Phi(qz,\varepsilon z^{-1})
=\sum_{n\in\Z}(qz)^{n(n+1)/2}(\varepsilon z^{-1})^{n(n-1)/2}
=\sum_{n\in\Z}\frac{\varepsilon^{n(n-1)/2}}{K^{T_n}}z^n
=k_{K,\varepsilon}(z),
\]
since $T_n-n(n-1)/2=n$ and $q^{T_n}=K^{-T_n}$.
\end{proof}

Define the best constant obtainable within this scaling-identity family:
\[
\beta_{\mathrm{SI}}:=\sup_{K>1,\ |\varepsilon|=1}\beta(f_{K,\varepsilon}).
\]

\begin{corollary}\label{cor:extremal}
We have
\[
\beta_{\mathrm{SI}}
=\frac{1}{\displaystyle \inf_{0<q<1,\ |\varepsilon|=1}\ \max_{|z|=1}\bigl|\Phi(qz,\varepsilon z^{-1})\bigr| }.
\]
In particular, \(B\ge \beta_{\mathrm{SI}}\).
\end{corollary}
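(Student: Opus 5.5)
The plan is to derive the corollary directly from Theorem~\ref{thm:exactbeta} together with Proposition~\ref{prop:theta}, since the statement is essentially a rewriting of the definition of $\beta_{\mathrm{SI}}$.

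\emph{Step 1: express $\beta_{\mathrm{SI}}$ through $A(K,\varepsilon)$.} By Theorem~\ref{thm:exactbeta}, for every admissible pair $(K,\varepsilon)$ we have $\beta(f_{K,\varepsilon})=1/A(K,\varepsilon)$, and $A(K,\varepsilon)>0$. Taking the supremum over $K>1$, $|\varepsilon|=1$ gives
\[
\beta_{\mathrm{SI}}=\sup_{K,\varepsilon}\frac{1}{A(K,\varepsilon)}=\frac{1}{\inf_{K,\varepsilon}A(K,\varepsilon)}.
\]
This uses only that $x\mapsto 1/x$ is decreasing on $(0,\infty)$.

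\emph{Step 2: the degenerate case.} We must rule out $\inf A=0$, or else adopt the convention $1/0=\infty$, in which case both sides equal $\infty$. To rule it out, note that by Cauchy's estimate the constant coefficient $A_0=1$ of $k$ satisfies $|A_0|\le\max_{|z|=1}|k(z)|$, so $A(K,\varepsilon)\ge1$. Hence the infimum is at least $1$ and the expression is well defined; this also shows $\beta_{\mathrm{SI}}\le1$.

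\emph{Step 3: change variables.} The substitution $q=K^{-1}$ is a bijection from $(1,\infty)$ onto $(0,1)$. By Proposition~\ref{prop:theta}, $k_{K,\varepsilon}(z)=\Phi(qz,\varepsilon z^{-1})$ for $|z|=1$. Taking the maximum over $|z|=1$ gives $A(K,\varepsilon)=\max_{|z|=1}|\Phi(qz,\varepsilon z^{-1})|$. Substituting this into the infimum yields the displayed formula.

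\emph{Step 4: the final claim.} Each $f_{K,\varepsilon}$ is transcendental entire by Lemma~\ref{lem:conv}, so $\beta(f_{K,\varepsilon})\le B$. Taking the supremum gives $B\ge\beta_{\mathrm{SI}}$.

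No step is a genuine obstacle. The only point needing care is the positivity and finiteness of the infimum in Step 2, which the Cauchy estimate settles.
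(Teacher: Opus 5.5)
Your proposal is correct and follows the paper's argument: the paper likewise treats the corollary as an immediate reformulation of Theorem~\ref{thm:exactbeta} via Proposition~\ref{prop:theta}, the substitution $q=K^{-1}$, and the identity $\sup(1/X)=1/\inf X$ for positive $X$. Your Step~2 is a small addition: the constant Laurent coefficient is $1$, so Cauchy's estimate gives $A(K,\varepsilon)\ge 1$, which makes explicit the positivity of the infimum that the paper leaves implicit.
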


\begin{proof}
This is an immediate reformulation of Theorem~\ref{thm:exactbeta} using Proposition~\ref{prop:theta} and the identity
$\sup(1/X)=1/\inf(X)$ for positive $X$.
\end{proof}

\section{An explicit numerical lower bound}\label{sec:section3}

By Theorem~\ref{thm:exactbeta}, obtaining an explicit lower bound for $B$ reduces to bounding
\[
A(K,\varepsilon)=\max_{|z|=1}\bigl|k_{K,\varepsilon}(z)\bigr|
\]
for a suitable choice of parameters $(K,\varepsilon)$. In this section we fix explicit parameters $(K_0,\varepsilon_0)$ suggested by a numerical optimization
and compute a fully auditable upper bound for $A(K_0,\varepsilon_0)$ using ball arithmetic.

\begin{lemma}\label{lem:cos}
For every $\theta\in\mathbb R$ one has
\[
e^{i\theta}k_{K,\varepsilon}(\varepsilon e^{2i\theta})
=
2\sum_{n=0}^{\infty}\frac{\varepsilon^{T_n}}{K^{T_n}}\cos\bigl((2n+1)\theta\bigr).
\]
Consequently,
\[
\max_{|z|=1}|k_{K,\varepsilon}(z)|
=
\max_{\theta\in[0,2\pi)}\left|
2\sum_{n=0}^{\infty}\frac{\varepsilon^{T_n}}{K^{T_n}}\cos\bigl((2n+1)\theta\bigr)
\right|.
\]
\end{lemma}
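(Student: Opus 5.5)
The plan is to verify the identity by a direct termwise computation, pairing each index $n$ with its reflection $-n-1$, and then to read off the maximum statement from a surjective reparametrization of the unit circle.

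\textbf{Step 1: expand the left-hand side.} Substituting $z=\varepsilon e^{2i\theta}$ into the Laurent series is justified on $|z|=1$ by the absolute convergence established in Lemma~\ref{lem:conv}. This gives
\[
e^{i\theta}k(\varepsilon e^{2i\theta})=\sum_{n\in\Z}A_n\varepsilon^n e^{i(2n+1)\theta}.
\]
Since $A_n\varepsilon^n=\varepsilon^{n(n-1)/2+n}K^{-T_n}$ and $n(n-1)/2+n=T_n$, each coefficient equals $c_n:=\varepsilon^{T_n}K^{-T_n}$. The exponent $T_n$ is an integer, so $\varepsilon^{T_n}$ is unambiguous.

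\textbf{Step 2: exploit the symmetry $n\leftrightarrow -n-1$.} This involution is a bijection from $\{n\le -1\}$ onto $\{n\ge 0\}$. It satisfies $T_{-n-1}=\frac{(-n-1)(-n)}{2}=T_n$, hence $c_{-n-1}=c_n$. It also satisfies $2(-n-1)+1=-(2n+1)$. Splitting the absolutely convergent sum into $n\ge0$ and $n\le -1$ and reindexing the second part gives
\[
\sum_{n\ge0}c_n\bigl(e^{i(2n+1)\theta}+e^{-i(2n+1)\theta}\bigr)=2\sum_{n\ge0}c_n\cos((2n+1)\theta),
\]
which is the first claim.

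\textbf{Step 3: the maximum formula.} Because $|e^{i\theta}|=1$, we have $|k(\varepsilon e^{2i\theta})|=\bigl|2\sum_{n\ge0}c_n\cos((2n+1)\theta)\bigr|$. The map $\theta\mapsto\varepsilon e^{2i\theta}$ sends $[0,2\pi)$ onto the unit circle; it is already onto on $[0,\pi)$, since $|\varepsilon|=1$. Therefore the maximum of $|k|$ over $|z|=1$ equals the maximum over $\theta$ of the cosine series, and both maxima exist by continuity.

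No step here is a genuine obstacle. The only points needing care are the bookkeeping identities $T_{-n-1}=T_n$ and $n(n-1)/2+n=T_n$, and the justification that the bilateral sum may be rearranged, which follows from the absolute convergence in Lemma~\ref{lem:conv}.
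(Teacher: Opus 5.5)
Your proposal is correct and follows essentially the same route as the paper: substitute $z=\varepsilon e^{2i\theta}$, use the absolute convergence from Lemma~\ref{lem:conv} to pair the indices $n$ and $-n-1$ via $T_{-n-1}=T_n$, and then read off the maximum from the fact that $\theta\mapsto\varepsilon e^{2i\theta}$ maps onto the unit circle. The only difference is cosmetic: you multiply by $e^{i\theta}$ before splitting the sum, while the paper pairs the terms first and multiplies afterwards.
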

\begin{proof}
Write $z=\varepsilon e^{2i\theta}$. By Lemma~\ref{lem:conv}, the Laurent series defining $k_{K,\varepsilon}$
converges absolutely on $|z|=1$, and hence its terms may be rearranged freely (see~\cite[Theorem~3.55]{Ru76}).
In particular, pairing the terms with indices $n$ and $-n-1$ gives
\[
k(z)=\sum_{n\ge 0}\Bigl(A_n z^n + A_{-n-1}z^{-n-1}\Bigr).
\]
Using $T_{-n-1}=T_n$, one checks
\[
A_n z^n=\frac{\varepsilon^{T_n}}{K^{T_n}}e^{2in\theta},
\qquad
A_{-n-1}z^{-n-1}=\frac{\varepsilon^{T_n}}{K^{T_n}}e^{-2i(n+1)\theta}.
\]
Hence
\[
e^{i\theta}\Bigl(A_n z^n + A_{-n-1}z^{-n-1}\Bigr)
=\frac{\varepsilon^{T_n}}{K^{T_n}}\Bigl(e^{i(2n+1)\theta}+e^{-i(2n+1)\theta}\Bigr)
=2\frac{\varepsilon^{T_n}}{K^{T_n}}\cos((2n+1)\theta),
\]
and summing over $n\ge0$ gives the first identity.

Finally, the map $\theta\mapsto \varepsilon e^{2i\theta}$ is a parametrization of the unit circle, and $|e^{i\theta}|=1$.
Taking absolute values and maxima over $\theta\in[0,2\pi)$ gives the second identity.
\end{proof}

We now fix the explicit parameters used in Theorem~\ref{thm:main}.

Fix the exact rationals
\[
K_0:=\frac{7137}{2000}=3.56850,\qquad \alpha_0:=\frac{198074929}{50000000}=3.96149858,
\]and define $\varepsilon_0:=e^{i\alpha_0}$. Let $f_0:=f_{K_0,\varepsilon_0}$ and $k_0:=k_{K_0,\varepsilon_0}$, and set
\[
A_0:=\max_{|z|=1}|k_0(z)|.
\]
By Theorem~\ref{thm:exactbeta}, $\beta(f_0)=1/A_0$, so it suffices to prove $A_0<1.70919$.

Define the truncated trigonometric polynomial
\[
P(\theta):=
2\sum_{n=0}^{5}\frac{\varepsilon_0^{T_n}}{K_0^{T_n}}\cos\bigl((2n+1)\theta\bigr),
\]
and let
\[
R(\theta):=
2\sum_{n=6}^{\infty}\frac{\varepsilon_0^{T_n}}{K_0^{T_n}}\cos\bigl((2n+1)\theta\bigr)
\]
be the tail, so that Lemma~\ref{lem:cos} gives
\[
A_0=\max_{\theta\in[0,2\pi)}|P(\theta)+R(\theta)|.
\]

\begin{lemma}\label{lem:tail}
For all $\theta\in\R$,
\[
|R(\theta)|
\le \frac{2K_0^{-21}}{1-K_0^{-7}}.
\]
\end{lemma}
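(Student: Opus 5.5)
The plan is to bound the tail termwise and then compare it with a geometric series. For every $\theta\in\R$ we have $|\cos((2n+1)\theta)|\le 1$, and $|\varepsilon_0|=1$. Hence each term of $R(\theta)$ has modulus at most $2K_0^{-T_n}$. Since $K_0>1$, the series $\sum_{n\ge6}K_0^{-T_n}$ converges absolutely, so the triangle inequality for absolutely convergent series gives
\[
|R(\theta)|\le 2\sum_{n=6}^{\infty}K_0^{-T_n}.
\]

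Next we compare the exponents $T_n=n(n+1)/2$ with a linear sequence. For $n\ge6$ write $n=6+j$ with $j\ge0$. Then
\[
T_{6+j}-T_6=\sum_{i=7}^{6+j} i \ge 7j,
\]
because each of the $j$ summands is at least $7$. Since $T_6=21$, this gives $T_n\ge 21+7(n-6)$. Because $K_0>1$, it follows that $K_0^{-T_n}\le K_0^{-21}(K_0^{-7})^{\,n-6}$.

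Summing the resulting geometric series with ratio $K_0^{-7}<1$ yields
\[
\sum_{n\ge6}K_0^{-T_n}\le \frac{K_0^{-21}}{1-K_0^{-7}},
\]
and multiplying by $2$ gives the claim. Every step here is elementary, so I expect no real obstacle. The only point needing care is the exponent inequality $T_n\ge 21+7(n-6)$, and this follows directly from the telescoping identity $T_{n+1}-T_n=n+1\ge7$ for $n\ge6$.
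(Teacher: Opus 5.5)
Your proposal is correct and follows the paper's proof essentially step for step. You bound each term using $|\varepsilon_0^{T_n}|=1$ and $|\cos|\le 1$, derive $T_n\ge 21+7(n-6)$ from $T_{n+1}-T_n=n+1\ge 7$, and sum the resulting geometric series with ratio $K_0^{-7}$.
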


\begin{proof}
Using $|\varepsilon_0^{T_n}|=1$ and $|\cos|\le 1$,
\[
|R(\theta)|\le 2\sum_{n=6}^{\infty}K_0^{-T_n}.
\]
Since $T_6=21$ and $T_{n+1}-T_n=n+1\ge 7$ for $n\ge 6$, we have $T_n\ge 21+7(n-6)$ for $n\ge6$, hence
\[
\sum_{n=6}^{\infty}K_0^{-T_n}\le K_0^{-21}\sum_{m=0}^{\infty}K_0^{-7m}
=\frac{K_0^{-21}}{1-K_0^{-7}}.
\qedhere
\]
\end{proof}

\begin{lemma}\label{lem:lipschitz}
The function $P$ is $4$-Lipschitz on $\mathbb R$, i.e.
\[
|P(\theta)-P(\varphi)|\le 4|\theta-\varphi|
\qquad(\theta,\varphi\in\mathbb R).
\]
\end{lemma}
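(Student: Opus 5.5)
Since $P$ is a finite trigonometric sum, it is $C^1$ on $\R$, so by the mean value inequality it suffices to show $|P'(\theta)|\le 4$ for every $\theta\in\R$. Note that $P$ is complex-valued, since the coefficients $\varepsilon_0^{T_n}$ are unimodular complex numbers. We therefore apply the fundamental theorem of calculus directly, $P(\theta)-P(\varphi)=\int_\varphi^\theta P'(t)\,dt$, which gives $|P(\theta)-P(\varphi)|\le \sup|P'|\cdot|\theta-\varphi|$. This avoids the mean value theorem, which fails for complex-valued functions.

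Differentiating termwise,
\[
P'(\theta)=-2\sum_{n=0}^{5}\frac{\varepsilon_0^{T_n}}{K_0^{T_n}}(2n+1)\sin\bigl((2n+1)\theta\bigr).
\]
Using $|\varepsilon_0|=1$ and $|\sin|\le1$, we get
\[
|P'(\theta)|\le 2\sum_{n=0}^{5}\frac{2n+1}{K_0^{T_n}}.
\]
It then remains to check that this numerical constant is at most $4$.

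The $n=0$ term contributes $2$. The remaining terms are
\[
2\Bigl(\frac{3}{K_0}+\frac{5}{K_0^{3}}+\frac{7}{K_0^{6}}+\frac{9}{K_0^{10}}+\frac{11}{K_0^{15}}\Bigr).
\]
With $K_0=3.5685>3.5$, we have $3/K_0<0.841$, $5/K_0^3<5/42.8<0.117$, $7/K_0^6<7/1800<0.004$, and the last two terms are negligible (each below $10^{-4}$). Hence the bracket is below $0.963$, and the whole sum is at most $2+1.926<4$.

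There is no real obstacle here. The only point needing care is the complex-valuedness of $P$, which the integral form of the estimate handles; the numerical bound itself is comfortable.
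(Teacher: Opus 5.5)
Your proposal is correct and follows the paper's argument: termwise differentiation, the bound $|P'(\theta)|\le 2\sum_{n=0}^{5}(2n+1)K_0^{-T_n}$, and a numerical check that this constant is below $4$. Your use of $P(\theta)-P(\varphi)=\int_\varphi^\theta P'(t)\,dt$ to handle the complex-valuedness of $P$ makes explicit the step the paper states only as ``the Lipschitz bound follows.''
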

\begin{proof}
Differentiate termwise and use $|\sin|\le 1$ and $|\varepsilon_0^{T_n}|=1$:
\[
|P'(\theta)|
\le 2\sum_{n=0}^{5}(2n+1)\,K_0^{-T_n}.
\]
To see this is $<4$, note that $T_0,T_1,\dots,T_5=0,1,3,6,10,15$ and $K_0>3.5$, so
\[
2\sum_{n=0}^{5}(2n+1)K_0^{-T_n}
<
2\left(1+\frac{3}{3.5}+\frac{5}{3.5^3}+\frac{7}{3.5^6}+\frac{9}{3.5^{10}}+\frac{11}{3.5^{15}}\right)
<4.
\]
The Lipschitz bound follows.
\end{proof}

\begin{lemma}\label{lem:mesh}
Let $M:=2{,}000{,}000$ and $\theta_j:=2\pi j/M$ for $j=0,1,\dots,M-1$.
Then the finite maximum satisfies the certified inequality
\[
\max_{0\le j<M}|P(\theta_j)|\le 1.709176398.
\]
\end{lemma}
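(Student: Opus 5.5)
This is a finite computation, so the plan is a rigorous machine evaluation of $|P|$ at all $M=2{,}000{,}000$ mesh points using ball (interval) arithmetic, for instance Arb via \texttt{python-flint}. Every quantity will be an exact rational or an exactly defined transcendental enclosed in a ball with rigorously controlled radius, so the final upper bound is certified and not merely approximate. The key steps are as follows.

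First, precompute the six coefficients $c_n:=2\varepsilon_0^{T_n}K_0^{-T_n}$ for $n=0,\dots,5$ as complex balls, with $T_n\in\{0,1,3,6,10,15\}$. Since $K_0=7137/2000$ and $\alpha_0$ are exact rationals, $K_0^{-T_n}$ is an exact rational and $\varepsilon_0^{T_n}=e^{iT_n\alpha_0}$ is an Arb ball for $\cos(T_n\alpha_0)+i\sin(T_n\alpha_0)$. I will work at roughly $100$ bits of precision, which keeps every radius far below $10^{-9}$.

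Second, for each $j$, form the ball $\theta_j=2\pi j/M$ from a ball for $\pi$ and evaluate $\cos((2n+1)\theta_j)$ for $n\le5$. To save time, compute $e^{i\theta_j}$ once and obtain the odd harmonics by repeated multiplication, or use Chebyshev recurrences; this is still rigorous because the arithmetic is ball arithmetic throughout. Then form the complex ball $P(\theta_j)=\sum_n c_n\cos((2n+1)\theta_j)$ and take a rigorous upper bound for its modulus, $|{\rm mid}|+{\rm rad}$, via Arb's \texttt{abs\_upper}. Keep a running maximum of these upper bounds and finally check that it is $\le 1.709176398$.

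I could halve or quarter the work using the symmetries $\cos((2n+1)(\theta+\pi))=-\cos((2n+1)\theta)$ and $\theta\mapsto-\theta$, which leave $|P|$ invariant and map the mesh to itself. Since $M$ is divisible by $4$, these symmetries let me restrict to $0\le j\le M/2$. This is optional, but it makes the computation cheaper.

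The main obstacle is not mathematical but in making the certificate auditable. The comparison must be done against upper endpoints of balls, never floating-point midpoints, and the inputs must be the exact rationals $K_0$ and $\alpha_0$. I will record the achieved maximum upper bound, the index where it occurs, and the maximum ball radius in a log so that anyone can rerun the script and reproduce the certificate. The margin is presumably about $10^{-9}$, and the $100$-bit precision keeps accumulated radii negligible relative to it.
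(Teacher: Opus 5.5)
Your proposal is correct and is essentially the paper's proof: exact rational inputs $K_0,\alpha_0$ converted to Arb balls, a rigorous ball for $\pi$ used to form each $\theta_j$, ball evaluation of $P(\theta_j)$ with a certified upper bound for the modulus, and a conservative comparison of the running maximum with $1.709176398$. The paper's log reports $1.7091763974\ldots$, a margin of about $6\times 10^{-10}$, which 100 bits handles easily. The only differences are optional speed-ups the paper does not use, namely generating the odd harmonics from $e^{i\theta_j}$ and exploiting $P(\theta+\pi)=-P(\theta)$ and $P(-\theta)=P(\theta)$; these are valid because $M$ is even, and together they would even let you stop at $j\le M/4$.
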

\begin{proof}
This is a finite computer-assisted verification carried out with Arb ball arithmetic (via \texttt{python-flint}).
For each mesh point $\theta_j$ we evaluate $P(\theta_j)$ in rigorous complex ball arithmetic,
obtaining an enclosure of the exact complex value in the form of an \texttt{acb} ball.
From this enclosure we extract a certified upper bound for the modulus,
denoted in the code by
\[
U_j:=\texttt{abs\_upper}\bigl(P(\theta_j)\bigr),
\]
which is guaranteed to satisfy $U_j\ge |P(\theta_j)|$.
Taking the maximum over $j=0,1,\dots,M-1$ gives
\[
\max_{0\le j<M}|P(\theta_j)|\le \max_{0\le j<M} U_j \le 1.709176398,
\]
as claimed. Full reproducibility details (code, parameters, and output log) are provided in Appendix~\ref{app:cert}.
\end{proof}

\begin{proposition}\label{prop:A0}
One has $A_0<1.70919$.
\end{proposition}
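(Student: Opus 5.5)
The argument combines the three preceding ingredients: the mesh bound of Lemma~\ref{lem:mesh}, the Lipschitz bound of Lemma~\ref{lem:lipschitz}, and the tail bound of Lemma~\ref{lem:tail}. Lemma~\ref{lem:cos} gives $A_0=\max_{\theta}|P(\theta)+R(\theta)|$, so for every $\theta$ we have
\[
|P(\theta)+R(\theta)|\le |P(\theta)|+|R(\theta)|.
\]
It therefore suffices to bound $\sup_\theta|P(\theta)|$ and $\sup_\theta|R(\theta)|$ separately and check that the sum is below $1.70919$.

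\textbf{Bounding $P$.} Fix $\theta\in[0,2\pi)$ and choose a mesh point $\theta_j$ (with $\theta_M:=2\pi\equiv\theta_0$ by periodicity) such that $|\theta-\theta_j|\le \pi/M$. By Lemma~\ref{lem:lipschitz}, $|P|$ inherits the Lipschitz constant $4$ through the reverse triangle inequality. Hence
\[
|P(\theta)|\le |P(\theta_j)|+4\cdot\frac{\pi}{M}\le 1.709176398+\frac{4\pi}{2\cdot 10^{6}}.
\]
Here $4\pi/(2\cdot10^6)=2\pi\cdot 10^{-6}<6.2832\cdot10^{-6}$, so $|P(\theta)|<1.709182682$ for all $\theta$.

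\textbf{Bounding $R$.} By Lemma~\ref{lem:tail}, $|R(\theta)|\le 2K_0^{-21}/(1-K_0^{-7})$. With $K_0=3.5685$ we have $\log_{10}K_0\approx0.5525$, so $K_0^{21}\approx 10^{11.60}\approx 4\cdot10^{11}$. The tail is therefore at most about $5\cdot10^{-12}$. To keep this rigorous, I will use the crude bounds $K_0>3.5$ and $3.5^{21}>10^{11}$, which give $|R|<2.1\cdot 10^{-11}$.

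\textbf{Combining.} Adding the two bounds gives
\[
A_0<1.709182682+2.1\cdot10^{-11}<1.70919.
\]
The margin is about $7\cdot10^{-6}$, which is comfortable.

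The only step needing care is the mesh-to-continuum passage. One must make sure the nearest mesh point is within $\pi/M$ rather than $2\pi/M$, including the wraparound at $2\pi$. One must also check that the resulting margin is positive. Even the cruder distance $2\pi/M$ would give an error of $4\pi\cdot10^{-6}\approx1.26\cdot10^{-5}$, leading to $1.7091889<1.70919$, so either choice works. The rest is elementary arithmetic with explicit rational bounds.
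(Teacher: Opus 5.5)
Your proof is correct and follows the paper's argument essentially step for step. You split $A_0\le \max|P|+\sup|R|$, pass from the mesh to all $\theta$ using the $4$-Lipschitz bound with circular distance at most $\pi/M$, and add the tail estimate from Lemma~\ref{lem:tail}. The only differences are cosmetic: your tail bound $2.1\cdot 10^{-11}$ is cruder than the paper's $5.1\times10^{-12}$, and you handle the wraparound via $\theta_M:=2\pi$; neither affects the conclusion.
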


\begin{proof}
Let $M$ and $\theta_j$ be as in Lemma~\ref{lem:mesh}. We bound separately the truncated part and the tail:
\[
A_0=\max_{\theta\in[0,2\pi)}|P(\theta)+R(\theta)|
\le \max_{\theta\in[0,2\pi)}|P(\theta)|+\sup_{\theta\in[0,2\pi)}|R(\theta)|.
\]

\smallskip
\noindent\underline{\emph{Truncated part.}}
Fix $\theta\in[0,2\pi)$. Since the mesh points $\theta_j=2\pi j/M$ have spacing $2\pi/M$,
there exists $j\in\{0,1,\dots,M-1\}$ such that the circular distance satisfies
\[
d(\theta,\theta_j):=\min\bigl(|\theta-\theta_j|,\ 2\pi-|\theta-\theta_j|\bigr)\le \frac{\pi}{M}.
\]
Because $P$ is $2\pi$-periodic, we may replace $\theta$ by $\theta\pm 2\pi$ if necessary so that
$|\theta-\theta_j|=d(\theta,\theta_j)$. Lemma~\ref{lem:lipschitz} then gives
\[
|P(\theta)|\le |P(\theta_j)|+4\,d(\theta,\theta_j)\le |P(\theta_j)|+4\frac{\pi}{M}.
\]
Taking the maximum over $\theta\in[0,2\pi)$ and using Lemma~\ref{lem:mesh}, we obtain
\[
\max_{\theta\in[0,2\pi)}|P(\theta)|
\le \max_{0\le j<M}|P(\theta_j)|+4\frac{\pi}{M}
\le 1.709176398+4\frac{\pi}{2{,}000{,}000}
<1.709183.
\]

\smallskip
\noindent\underline{\emph{Tail.}}
By Lemma~\ref{lem:tail},
\[
\sup_{\theta\in[0,2\pi)}|R(\theta)|
\le \frac{2K_0^{-21}}{1-K_0^{-7}}
<5.1\times 10^{-12}.
\]
Therefore,
\[
A_0<1.709183+5.1\times 10^{-12}<1.70919,
\]
as claimed.
\end{proof}

We are now ready to present the following.

\begin{proof}[Proof of Theorem~\ref{thm:main}]
By Theorem~\ref{thm:exactbeta} we have $\beta(f_0)=1/A_0$. Proposition~\ref{prop:A0} gives
$A_0<170919/100000$, and hence
\[
\beta(f_0)=\frac{1}{A_0}>\frac{100000}{170919}>\frac{58507}{100000}.
\]Therefore $\beta(f_0)>0.58507$. Since $f_0$ is transcendental entire, we have
$B=\sup_f \beta(f)\ge \beta(f_0)>0.58507$.
\end{proof}

\appendix

\section{Auditable certified computation using ball arithmetic}\label{app:cert}

This appendix documents the finite, auditable computation behind Lemma~\ref{lem:mesh}.
The statement to be certified is purely discrete: we evaluate a fixed trigonometric polynomial $P$
on the uniform mesh $\{\theta_j\}_{j=0}^{M-1}$ with $M=2{,}000{,}000$ and obtain a rigorous upper bound for \(\max_{0\le j<M}|P(\theta_j)|\).

\subsection{Setup: the exact objects being evaluated}

We fix the exact parameters
\[
K_0=\frac{7137}{2000},\qquad \alpha_0=\frac{198074929}{50000000},\qquad \varepsilon_0=e^{i\alpha_0},
\]
and for $n=0,1,\dots,5$ define
\[
T_n=\frac{n(n+1)}{2},\qquad
c_n=\frac{\varepsilon_0^{T_n}}{K_0^{T_n}}=\frac{e^{i\alpha_0 T_n}}{K_0^{T_n}}.
\]
We then consider the trigonometric polynomial
\[
P(\theta)=2\sum_{n=0}^{5} c_n \cos\bigl((2n+1)\theta\bigr).
\]
Finally, set
\[
M=2{,}000{,}000,\qquad \theta_j=\frac{2\pi j}{M}\qquad (j=0,1,\dots,M-1).
\]
Lemma~\ref{lem:mesh} claims the certified inequality
\[
\max_{0\le j<M} |P(\theta_j)| \le 1.709176398.
\]

\subsection{Ball arithmetic: what is guaranteed}

Arb-style ball arithmetic represents a real number by a midpoint-radius interval
\[
[m\pm r]=\{x\in\R:\ |x-m|\le r\},
\]
and similarly represents a complex number by a complex ball.
The key guarantee is the \emph{outward-rounding principle}:

\medskip
\noindent\textbf{Outward rounding principle.}
\emph{Each elementary operation (addition, multiplication, $\cos$, $\exp$, etc.) returns a ball that
provably contains the exact result corresponding to the exact values contained in the input balls;}
see~\cite{johansson-arb}.

\medskip
In particular, if a complex ball $Z$ is known to contain the exact value $P(\theta_j)$, then any certified
upper bound for $\sup\{|z|:z\in Z\}$ is automatically a rigorous upper bound for $|P(\theta_j)|$.

\subsection{Certification of Lemma~\ref{lem:mesh}: the finite procedure}

The certification is a deterministic finite computation. Conceptually, it consists of the following steps.

\begin{enumerate}
\item \textbf{Exact input (no floats).}
Represent $K_0$ and $\alpha_0$ as exact multiprecision rationals (type \texttt{fmpq}), and only then convert them
to Arb real balls via \texttt{arb(fmpq(...))}. This avoids any ambiguity from floating-point parsing; see
\cite{pyflint-fmpq,pyflint-general}.

\item \textbf{Rigorous $\pi$ and rigorous mesh points.}
Obtain $\pi$ as a real ball guaranteed to contain the true value of $\pi$.
Compute each $\theta_j=(2\pi/M)\,j$ as the product of the $\pi$-ball with the exact rational $2j/M$.
The resulting real ball is guaranteed to contain the exact real number $\theta_j$.

\item \textbf{Rigorous coefficients.}
For each $n=0,1,\dots,5$, compute
\[
c_n=\frac{e^{i\alpha_0 T_n}}{K_0^{T_n}}
\]
using rigorous complex exponentiation and division, producing a complex ball containing the exact $c_n$.

\item \textbf{Rigorous evaluation of $P(\theta_j)$.}
For each $j=0,1,\dots,M-1$, evaluate
\[
P(\theta_j)=2\sum_{n=0}^{5} c_n \cos\bigl((2n+1)\theta_j\bigr)
\]
using rigorous cosine and ball operations. The output is a complex ball enclosing the exact value $P(\theta_j)$.

\item \textbf{Rigorous modulus bounds and the discrete maximum.}
From the enclosing complex ball $Z_j\ni P(\theta_j)$, extract a certified upper bound
\[
u_j \ge |P(\theta_j)|.
\]
In our implementation this is done by \texttt{abs\_upper}: the value \texttt{abs\_upper}$(Z_j)$ is guaranteed to be
$\ge \sup\{|z|: z\in Z_j\}$, and hence $\ge |P(\theta_j)|$; see~\cite{pyflint-acb}.
Finally set
\[
U:=\max_{0\le j<M}u_j.
\]
Then $U$ is a rigorous upper bound for $\max_{0\le j<M}|P(\theta_j)|$.

\item \textbf{Comparison with the target bound.}
Let
\[
C_{\mathrm{mesh}}:=1.709176398=\frac{1709176398}{10^9}.
\]
We certify $U\le C_{\mathrm{mesh}}$ using the ball-comparison routine.
In \texttt{python-flint}, comparisons are conservative: the predicate $U\le C_{\mathrm{mesh}}$ returns \texttt{True}
only when the inequality is certainly valid for the underlying enclosures; see~\cite{pyflint-general}.
Thus, if the program prints \texttt{True}, Lemma~\ref{lem:mesh} is certified.
\end{enumerate}

\subsection{Reproducibility and recorded output}

The above procedure is implemented in the ancillary Python script \texttt{cert\_mesh\_arb.py}. The source code is available in the second author's public repository~\cite{Git}.
The script uses Arb ball arithmetic via the \texttt{arb}/\texttt{acb} types provided by \texttt{python-flint},
and takes as input the mesh size $M$, the working precision (in decimal digits), and the number of worker processes.

\medskip
\noindent\emph{Software versions.}
All certifications reported here were run on a 64-bit Windows machine using \texttt{Python~3.12.3}
and \texttt{python-flint~0.8.0} (FLINT/Arb ball arithmetic).

\medskip
Running the script with
\[
(M,\mathrm{dps},\mathrm{workers})=(2000000,90,1)
\]
produces the following output:
\begin{verbatim}
=== Arb-certified mesh bound ===
M = 2000000, dps = 90, workers = 1
mesh_max_upper = 1.709176397414741464507875 (attained at j = 1794536 )
Target bound   = 1.709176398000000000000000
Certified mesh_max_upper <= bound : True
\end{verbatim}
The final line \texttt{True} certifies that the computed value \texttt{mesh\_max\_upper} is bounded above by the stated target bound,
and hence establishes Lemma~\ref{lem:mesh}.

\end{document}